\newtheorem{theorem}{Theorem}[section]
\newtheorem{lem}[theorem]{Lemma}
\theoremstyle{definition}
\newtheorem{example}[theorem]{Example}
\theoremstyle{remark}
\newtheorem{remark}[theorem]{Remark}
\numberwithin{equation}{section}
\begin{document}

\newcommand{\spacing}[1]{\renewcommand{\baselinestretch}{#1}\large\normalsize}
\spacing{1.14}

\title{Left invariant lifted $(\alpha,\beta)$-metrics of Douglas type on tangent Lie groups}

\author{Masumeh Nejadahmad}
\address{Masumeh Nejadahmad, Department of Mathematics, Isfahan University of Technology.\\ E-Mail: \tt{masumeh.nejadahmad@math.iut.ac.ir}}

\author{Hamid Reza Salimi Moghaddam}
\address{Hamid Reza Salimi Moghaddam, Department of Mathematics, Faculty of  Sciences, University of Isfahan, Isfahan, 81746-73441-Iran.\\ E-Mail: \tt{hr.salimi@sci.ui.ac.ir and salimi.moghaddam@gmail.com}}

\keywords{Left invariant $(\alpha,\beta)$-metric; complete and vertical lifts; flag curvature.\\
AMS 2010 Mathematics Subject Classification: 53B21, 22E60, 22E15.}

\date{\today}
\begin{abstract}
In this paper we study lifted left invariant $(\alpha,\beta)$-metrics of Douglas type on tangent Lie groups. Let $G$ be a Lie group equipped with a left invariant $(\alpha,\beta)$-metric of Douglas type $F$, induced by a left invariant Riemannian metric $g$. Using vertical and complete lifts, we construct the vertical and complete lifted $(\alpha,\beta)$-metrics $F^v$ and $F^c$ on the tangent Lie group $TG$ and give necessary and sufficient conditions for them to be of Douglas type. Then, the flag curvature of these metrics are studied. Finally, as some special cases, the flag curvatures of $F^v$ and $F^c$ in the cases of Randers metrics of Douglas type, and Kropina and Matsumoto metrics of Berwald type are given.
\end{abstract}

\maketitle

\section{\textbf{Introduction}}
Tangent bundles of differentiable manifolds have great importance in many fields of mathematics and physics. The study of Riemannian geometry of tangent bundles goes back to the fundamental paper \cite{Sasaki} written by Sasaki published in 1958. He used vertical and horizontal lifts to show that any Riemannian manifold $(M,g)$ induces a Riemannian metric on $TM$. Yano and Kobayashi replaced the horizontal lift with complete lift and studied many geometric properties of such lifted metrics (see \cite{Yano-Kobayashi 1}, \cite{Yano-Kobayashi 2} and \cite{Yano-Kobayashi 3}). Asgari and the second author studied the Riemannian geometry of lifted invariant Riemannian metrics induced on $TG$ by using vertical and complete lifts (\cite{Asgari-Salimi 1} and \cite{Asgari-Salimi 2}).\\
Using the lifted invariant Riemannian metrics together with vertical and complete lifts, they constructed two types of left invariant Randers metrics on the tangent bundle of Lie groups and studied their flag curvature in the case of Berwald metric (see \cite{Asgari-Salimi 3}). In this work, using the same way we build left invariant $(\alpha,\beta)$-metrics on tangent Lie groups. We give a necessary and sufficient condition for lifted $(\alpha,\beta)$-metrics to be of Douglas type and compute their flag curvatures.\\
Now we give some preliminaries about vertical and complete lifts and also Finsler geometry. \\

Let $M$ be an $m$-dimensional smooth manifold. Suppose that $TM$ is its tangent bundle. Any vector field $X$ on $M$ defines two types of (local) one-parameter groups of diffeomorphisms on $TM$ as follows:
 \begin{eqnarray}
\begin{array}{lr}
\phi_{t}(y) := (T_{x}\varphi_{t})(y),\ \ \ \ \forall x \in M, \forall y \in T_{x}M, &\nonumber\\
\psi_{t}(y) := y + t X(x), &
\end{array}
\end{eqnarray}
where $\varphi_{t}$ is the flow generated by the vector field $X$ on $M$. The maps $\phi_{t}$ and $\psi_{t}$ are the infinitesimal generators of one-parameter groups of diffeomorphisms. The vector fields corresponding to these one-parameter groups are called the complete lift (denoted by $X^{c}$) and vertical lift (denoted by $X^{v}$) of $X$, respectively. \\
For a local coordinates system $(U,(x^1\cdots,x^n))$ of $M$, we denote the induced local coordinates system on $TM$ by $(\pi^{-1}(U), (x^1\cdots,x^n;y^1\cdots,y^n))$, where $\pi : TM \rightarrow M$ is the projection map. Assume that $X$ is a vector field on $M$ with local representation $X\mid_{U} =\Sigma _{i=1}^{n} \xi^{i}\frac{\partial}{\partial x^{i}} $. Then its vertical and complete lifts in terms of local coordinates system $(x^{i} , y^{i})$ are as follows:
\begin{eqnarray}
\begin{array}{lr}
(X\mid _{U})^{v} = \Sigma _{i=1}^{n} \xi ^{i} \frac{\partial}{\partial y^{i}}, &\nonumber\\
(X\mid _{U})^{c} = \Sigma _{i=1}^{n} \xi ^{i} \frac{\partial}{\partial x^{i}} + \Sigma _{i,j=1}^{n} \frac{\partial \xi^{i}}{\partial x^{j}} y^{j} \frac{\partial}{\partial y^{i}}.
\end{array}
\end{eqnarray}
The Lie brackets of vertical and complete lifts of two vector fields $X$ and $Y$ satisfy the following equations (for more details see \cite{Yano-Ishihara, Hindeleh}),
\begin{equation}\label{Lie brackets of compelet and vertical}
    [ X^{v},Y^{v}] =0,\ \ \ \ \ [X^{c},Y^{c}]= [X,Y]^{c},\ \ \ \ \ [X^{v},Y^{c}] = [X,Y]^{v}.
\end{equation}
Assume that $G$ is a real $n$-dimensional connected Lie group with multiplication map $ \mu : G \times G \rightarrow G, (x,y) \mapsto xy $ and identity element $e$. Let $ l_{y} : G \rightarrow G,~x\mapsto yx $ and $ r_{y} : G\rightarrow G,~x \mapsto xy $ be left and right translations, respectively. Then we can see for all $ v \in T_{g}G$ and $w \in T_{h}G $ the tangent map
\begin{eqnarray}
\begin{array}{lr}
T\mu : T(G \times G) \cong TG \times TG \rightarrow TG  &\nonumber\\
(v,w) \mapsto T\mu (v,w) = T_{h}l_{g}w + T_{g}r_{h}v,~~ &
\end{array}
\end{eqnarray}
defines a Lie group structure on $TG$ with identity element $ 0_{e} \in T_{e}G $ and the inversion map $ T\iota $, where $ \iota $ is the inversion map of G (see \cite{Hilgert-Neeb}).\\

In \cite{Hindeleh}, it is shown that if $X$ is a left invariant vector field on $G$ then $ X^{c} $ and $ X^{v} $ are left invariant vector fields on $TG$. Therefore, for any left invariant Riemannian
metric $g$ on $G$, we can define a left invariant Riemannian metric $ \tilde{g} $ on TG as follows:
\begin{equation}\label{lifted Riemannian metric}
    \tilde{g} (X^{c},Y^{c}) = g(X,Y),~~~\tilde{g}(X^{v},Y^{v}) = g(X,Y),~~~\tilde{g}(X^{c},Y^{v}) = 0,
\end{equation}
where $X$ and $Y$ are arbitrary vector fields on $G$. In this work, we study the curvature of  left invariant $(\alpha , \beta)$-metrics of Douglas type on $TG$, where $\alpha$ is induced by a lifted left invariant Riemannian metric $\tilde{g}$. \\
A special type of Finsler metrics which belongs to the family of $(\alpha , \beta)$-metrics is Randers metric.
G. Randers introduced this family of Finsler metrics in his paper \cite{Randers} on general relativity, in 1941. These metrics have been used in more physical problems. For example, in four-dimensional case,  they are used in computation of the Lagrangian function of a test electric charge in the electromagnetic and gravitational fields (see \cite{Asanov, Ingarden, Landau-Lifshitz}).\\
A generalization of Randers metrics are $(\alpha , \beta)$-metrics which introduced by M. Matsumoto, in \cite{Matsumoto1}. These metrics are important and interesting types of Finsler metrics. \\
Let $(M, g)$ be a Riemannian manifold and $\beta$ be a $1$-form on $M$. Assume that $\alpha(x,y)=\sqrt{g_{ij} y^{i}y^{j}}$ and $\phi : (-b_{0} , b_{0}) \rightarrow (\mathbb{R})^{+} $ is a smooth map. It is shown that $ F=\alpha \phi (\frac{\beta}{\alpha})$  is a Finsler metric on $M$, which is called an $(\alpha , \beta)$-metric, if and only if $ \| \beta \|_{\alpha} < b_{0} $ and $ \phi = \phi(s) $ satisfies the following conditions (see \cite{Chern-Shen}):
\begin{equation}\label{inequality}
    \phi(s) - s\phi'(s) +(b^{2}-s^{2}) \phi''(s)>0,~~\mid s\mid \leq b < b_{0}.
\end{equation}
As some special cases, if $ \phi(s)=1+s$, $ \phi(s)=\frac{1}{s}$ or $ \phi(s)= \frac{1}{1-s} $, then we obtain three famous classes of Finsler metrics, which are called Randers metric $ \alpha + \beta $, Kropina metric $\frac{\alpha^{2}}{\beta}$ and Matsumoto metric $\frac{\alpha^{2}}{\alpha - \beta}$, respectively \cite{Chern-Shen}. \\
It is easy to see that for an arbitrary $1$-form $\beta$ on a Riemannian manifold $(M,g)$, there exists a unique vector field $X$ on $M$ such that for all $x \in M$ and $y\in T_{x}M$ we have
\begin{equation}
    g(y,X(x)) = \beta(x,y).
\end{equation}
This notation is very useful for construction left invariant $ (\alpha , \beta) $-metrics on Lie groups. If $(G,g)$ is a left invariant Riemannian metric and $X$ is a left invariant vector field on $G$ such that $ \parallel X \parallel_{\alpha}< b_{0} $, then the $(\alpha , \beta)$-metric which is defined as above is left invariant (see \cite{Deng-book} and \cite{Deng-Hosseini-Liu-Salimi}).\\
In this article we will study the flag curvature of some special Finsler metrics. This quantity is an important concept in Finsler geometry which is defined by
\begin{equation}\label{flag curvature formula}
    K(P,y) = \frac{g_{y}(R(u,y)y,u)}{g_{y}(y,y) g_{y}(u,u) - g_{y}^{2}(u,y)},
\end{equation}
where $P =\textsf{span}\{u,y\} $, $ g_{y}(u,v)=\frac{1}{2}\frac{\partial^{2}}{\partial s \partial t}F^{2}(y+su+tv)\mid _{s=t=0} $ is the fundamental tensor, $ R(u,y)y = \nabla_{u}\nabla_{y}y - \nabla_{y}\nabla_{u}y - \nabla_{[u,y]}y  $ is the curvature tensor and $ \nabla $ is the Chern connection of F (see \cite{Bao-Chern-Shen},\cite{Chern-Shen})\\
Suppose that $F$ is a Finsler metric on a smooth $n$-dimensional manifold $M$. In a standard local coordinates system of $TM$, the spray coefficients of $F$ are defined by
\begin{equation}\label{Spray coefficients}
    G^{i}(x,y):=\frac{1}{4}g^{il}([F^{2}]_{x^{m}y^{l}}y^{m}-[F^{2}]_{x^{l}}),~~i=1,\cdots, n, x\in M, y \in T_{x}M.
\end{equation}
The Finsler metric $F$ is called a Douglas metric if the spray coefficients $G^{i}$ satisfy the following relation,
\begin{equation}\label{Douglas condition}
    G^{i} = \frac{1}{2}\Gamma^{i}_{jk}(x)y^{j}y^{k}+P(x,y) y^{i},
\end{equation}
where $P(x,y)$ is a local positively homogeneous function of degree one on $TM$ and $F$ is called of Berwald type if $ P(x,y)=0 $ (see \cite{Bacso-Matsumoto} and \cite{Chern-Shen}). For an $ (\alpha , \beta) $-metric F it is well known that it is of Berwald type if and only if the 1-form $ \beta $ is parallel with respect to the Levi-Civita connection of $ \alpha $ \cite{Bao-Chern-Shen}. \\
In the last decade, many geometric properties of Lie groups equipped with left invariant Finsler metrics, or homogeneous spaces together with invariant Finsler metrics have been studied (for example see \cite{Deng-Hou, Deng-Hu} and \cite{Salimi1, Salimi2, Salimi3}). In this work we use the following theorem which is proved by Liu and Deng in \cite{Liu-Deng}.
\begin{theorem}\label{Liu-Deng theorem}
Assume that $F=\alpha \phi(\frac{\beta}{\alpha})$ is a homogeneous $(\alpha , \beta)$-metric on $G/H$. Then $F$ is a Douglas metric if and only if either $F$ is a Berwald metric or $F$ is a Douglas metric of Randers type.
\end{theorem}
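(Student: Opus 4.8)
The plan is to combine the classical description of the geodesic spray of an $(\alpha,\beta)$-metric with the elementary remark that homogeneity forces $b:=\|\beta\|_{\alpha}$ to be constant on $G/H$. Two of the implications are immediate: if $F$ is a Berwald metric then its spray coefficients have the form $G^{i}=\tfrac12\Gamma^{i}_{jk}(x)y^{j}y^{k}$, which is exactly \eqref{Douglas condition} with $P\equiv 0$, so $F$ is a Douglas metric; and if $F$ is a Douglas metric of Randers type there is nothing to prove. Hence the entire content lies in the forward implication: a homogeneous Douglas $(\alpha,\beta)$-metric that is not of Randers type must be a Berwald metric. (One may assume $\beta\not\equiv 0$, since otherwise $F$ is a constant multiple of $\alpha$, hence Riemannian and Berwald, and one may also assume $\dim G/H\geq 3$, the remaining low-dimensional cases being classical.)

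For the forward implication I would first write down the standard formula for the geodesic spray of $F=\alpha\phi(s)$ with $s=\beta/\alpha$ (see \cite{Chern-Shen}), namely
\[
G^{i}=G^{i}_{\alpha}+\alpha Q\,s^{i}{}_{0}+\Theta\,\omega\,\frac{y^{i}}{\alpha}+\Psi\,\omega\,b^{i},
\qquad \omega:=r_{00}-2\alpha Q\,s_{0},
\]
where $Q=\phi'/(\phi-s\phi')$, the functions $\Theta,\Psi$ are the usual rational expressions in $s$ built from $\phi$, and $r_{ij},s_{ij}$ denote the symmetric and skew-symmetric parts of the covariant derivative $b_{i|j}$ of $\beta$ with respect to $\alpha$, a subscript $0$ indicating contraction with $y$. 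By the B\'acs\'o--Matsumoto criterion \cite{Bacso-Matsumoto}, $F$ is a Douglas metric if and only if $G^{i}y^{j}-G^{j}y^{i}$ is a polynomial in $y$ for all $i,j$. Since $G^{i}_{\alpha}$ is quadratic in $y$ and the $\Theta$-term cancels in $G^{i}y^{j}-G^{j}y^{i}$, this amounts to requiring that
\[
\bigl[\alpha Q\,s^{i}{}_{0}+\Psi\,\omega\,b^{i}\bigr]y^{j}-\bigl[\alpha Q\,s^{j}{}_{0}+\Psi\,\omega\,b^{j}\bigr]y^{i}
\]
be polynomial in $y$. Working out when this expression --- a priori transcendental in $\alpha=\sqrt{a_{ij}y^{i}y^{j}}$ --- collapses to a polynomial yields the expected dichotomy: either $\phi$ is of Randers type (equivalently, after replacing $\alpha$ by a suitable Riemannian metric, $F$ becomes a Randers metric), which is excluded by hypothesis, or $\phi$ is not of Randers type, and then the Douglas condition forces $s_{ij}=0$ together with
\[
b_{i|j}=2\tau\bigl\{(1+\kappa b^{2})a_{ij}-\kappa b_{i}b_{j}\bigr\}
\]
for a scalar function $\tau=\tau(x)$ and a constant $\kappa$ depending on $\phi$. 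Establishing this normal form for $b_{i|j}$ is the technically heavy part: it uses the full spray formula above together with a careful analysis of the algebraic form of $\phi$, and it is where I expect the real difficulty to lie.

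The last step is where homogeneity enters. Since $G$ acts transitively on $G/H$ by isometries preserving both $\alpha$ and $\beta$, the function $b^{2}=a^{kl}b_{k}b_{l}$ is constant; differentiating covariantly gives $b^{i}b_{i|j}=0$. Contracting the normal form above with $b^{i}$ then yields
\[
0=b^{i}b_{i|j}=2\tau\bigl\{(1+\kappa b^{2})b_{j}-\kappa b^{2}b_{j}\bigr\}=2\tau\,b_{j},
\]
and since $\beta$ is a nonzero invariant $1$-form we have $b_{j}\neq 0$ at every point, so $\tau\equiv 0$. Therefore $b_{i|j}=0$, i.e. $\beta$ is parallel with respect to the Levi-Civita connection of $\alpha$, which --- as recalled just before Theorem \ref{Liu-Deng theorem} --- is precisely the condition that $F$ be of Berwald type. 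Assembling the cases gives the theorem. The main obstacle, as indicated, is the middle step; once the rigid equation for $b_{i|j}$ is in hand, the homogeneity argument is just the one-line contraction above.
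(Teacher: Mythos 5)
First, note that the paper does not prove this statement: Theorem \ref{Liu-Deng theorem} is imported verbatim from \cite{Liu-Deng}, so there is no internal proof to compare against, and your attempt has to be measured against the published argument of Liu and Deng. Your overall strategy is in fact the same as theirs: dispose of the easy implications, invoke the classification of Douglas $(\alpha,\beta)$-metrics with $\phi$ not of Randers type to get a rigidity equation for $b_{i|j}$, and then use homogeneity (constancy of $b=\|\beta\|_{\alpha}$, hence $b^{i}b_{i|j}=0$) to kill the scalar $\tau$ and conclude $b_{i|j}=0$, i.e.\ Berwald. The final contraction is indeed the point where homogeneity does all the work, and you have identified it correctly.

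The gap is exactly where you say it is, but it is larger than your write-up suggests, and the part you do write down is not quite right. The ``normal form'' step is not a computation you can leave as an expected difficulty: it is the entire content of the Li--Shen classification of Douglas $(\alpha,\beta)$-metrics (valid only for $\dim\geq 3$), and without it the proof does not exist. Moreover the correct conclusion of that classification is not the equation you state but
\[
b_{i|j}=2\tau(x)\bigl\{(1+k_{1}b^{2})a_{ij}+(k_{2}b^{2}+k_{3})b_{i}b_{j}\bigr\},\qquad s_{ij}=0,
\]
with constants $k_{1},k_{2},k_{3}$ tied to $\phi$ through an ODE. Contracting this with $b^{i}$ gives $2\tau\bigl(1+k_{1}b^{2}+k_{3}b^{2}+k_{2}b^{4}\bigr)b_{j}$, not $2\tau b_{j}$; your version with the coefficient $-\kappa b_{i}b_{j}$ makes the bracket collapse to $1$ by fiat. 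So an extra argument is needed to show the scalar factor is nonzero (it follows from the regularity condition \eqref{inequality} via the accompanying ODE for $\phi$, but this must be checked, and Liu--Deng do check it). Finally, ``the remaining low-dimensional cases being classical'' is not a proof: the two-dimensional case is outside the scope of the Li--Shen theorem and is treated separately in \cite{Liu-Deng}. As a roadmap of the published proof your proposal is accurate; as a self-contained proof it is missing its central step and has a coefficient error that silently trivializes the one verification the final contraction genuinely requires.
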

\section{\textbf{Lifting of $ (\alpha , \beta) $-metrics on Tangent Bundles}}
Let $G$ be a Lie group equipped with a left invariant Riemannian metric $g$. In \cite{Asgari-Salimi 1}, Asgari and the second author proved that for any $ X , Y \in \mathfrak{g}= Lie(G) $ the Levi-Civita connection of the lifted left invariant metric $\tilde{g}$ on $TG$ can be computed by the following equations:

\begin{equation}\label{Levi-Civita connection}
\left\{ \begin{array}{l}
 \tilde{\nabla} _{X^{c}}Y^{c}= (\nabla_{X}Y)^{c} \\
 \tilde{\nabla} _{X^{v}}Y^{v} = (\nabla_{X}Y - \frac{1}{2}[ X,Y ] )^{c} \\
 \tilde{\nabla} _{X^{c}}Y^{v} = (\nabla_{X}Y + \frac{1}{2} ad^{\ast}_{Y}X)^{v} \\
 \tilde{\nabla} _{X^{c}}Y^{v} = (\nabla_{X}Y + \frac{1}{2} ad^{\ast}_{Y}X)^{v}\end{array} \right.
\end{equation}
Suppose that $F$ is a left invariant $(\alpha,\beta)$-metric on $G$ defined by
\begin{equation}\label{me}
 F = \sqrt{g(y,y)} \phi(\frac{g(X(x),y)}{\sqrt{g(y,y)}})
\end{equation}
where $g$ and $X$ are a left invariant Riemannian metric and a left invariant vector field on $G$, respectively. Then by using vertical and complete lifts, we can define two types of left invariant Finsler metrics on $TG$ as follows:
\begin{equation}\label{complete lifted Finsler}
    F^{c}((x,y),\tilde{z})= \sqrt{\tilde{g}(\tilde{z},\tilde{z})} \phi(\frac{\tilde{g}(X^{c}(x,y),\tilde{z})}{\sqrt{\tilde{g}(\tilde{z},\tilde{z})}}),
\end{equation}

\begin{equation}\label{vertical lifted Finsler}
    F^{v}((x,y),\tilde{z})= \sqrt{\tilde{g}(\tilde{z},\tilde{z})} \phi(\frac{\tilde{g}(X^{v}(x,y),\tilde{z})}{\sqrt{\tilde{g}(\tilde{z},\tilde{z})}}),
\end{equation}
where $ x \in G $, $ y \in T_{x}G $ and $ \tilde{z} \in T_{(x,y)}TG$.\\
Since $ \parallel X^{c}\parallel_{\tilde{g}} =\parallel X^{v}\parallel_{\tilde{g}} = \parallel X\parallel_{g} <b_{0}$, $ F^{c} $ and $ F^{v} $ are left invariant $ (\alpha , \beta) $-metrics on $TG$.
In this section we suppose that $F$, $ F^{c} $ and $ F^{v} $ are defined as above.\\
We give a necessary and sufficient condition for $ F^{c}$ and $ F^{v}$ to be of  Douglas type.
\begin{lem}\label{fd}
Let $F$ be an arbitrary left invariant $ (\alpha , \beta)$-metric defined by
\ref{me}, where $g$ is a left invariant Riemannian metric and $X$ is a left invariant vector field on a Lie group $G$. $F$ is of Douglas type if and only if $F^{c}$ is of Douglas type.
\end{lem}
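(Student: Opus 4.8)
The plan is to reduce the equivalence to Theorem~\ref{Liu-Deng theorem} of Liu and Deng. Since $g$ and $X$ are left invariant, $F$ is a left invariant, hence homogeneous, $(\alpha,\beta)$-metric on $G$, and, as remarked after \eqref{vertical lifted Finsler}, $F^{c}$ is a left invariant (homogeneous) $(\alpha,\beta)$-metric on $TG$; crucially, $F$ and $F^{c}$ are built from the \emph{same} function $\phi$. By Theorem~\ref{Liu-Deng theorem}, each of $F$ and $F^{c}$ is a Douglas metric if and only if it is of Berwald type or a Douglas metric of Randers type. So I would prove the two equivalences: (A) $F$ is of Berwald type $\iff$ $F^{c}$ is of Berwald type; and (B) $F$ is a Douglas metric of Randers type $\iff$ $F^{c}$ is a Douglas metric of Randers type. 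Applying the Liu--Deng dichotomy to both $F$ and $F^{c}$ and chaining it with (A) and (B) then gives the lemma at once.

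To prove (A), I would use that an $(\alpha,\beta)$-metric is of Berwald type exactly when its defining $1$-form is parallel with respect to the Levi-Civita connection of its Riemannian part; for a left invariant metric this says the corresponding left invariant vector field is parallel. Thus $F$ is Berwald iff $\nabla_{Y}X=0$ for all $Y\in\mathfrak{g}$ ($\nabla$ the Levi-Civita connection of $g$), while $F^{c}$ is Berwald iff $\tilde{\nabla}_{Y^{c}}X^{c}=0$ and $\tilde{\nabla}_{Y^{v}}X^{c}=0$ for all $Y\in\mathfrak{g}$, since the fields $Y^{c},Y^{v}$ ($Y\in\mathfrak{g}$) span the Lie algebra of $TG$. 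From \eqref{Levi-Civita connection} one reads $\tilde{\nabla}_{Y^{c}}X^{c}=(\nabla_{Y}X)^{c}$, and, combining the third line of \eqref{Levi-Civita connection} with the torsion-freeness of $\tilde{\nabla}$ and the bracket rule \eqref{Lie brackets of compelet and vertical},
\[
\tilde{\nabla}_{Y^{v}}X^{c}=\tilde{\nabla}_{X^{c}}Y^{v}+[Y^{v},X^{c}]=\Bigl(\nabla_{X}Y+\frac{1}{2}ad^{\ast}_{Y}X+[Y,X]\Bigr)^{v}=\Bigl(\nabla_{Y}X+\frac{1}{2}ad^{\ast}_{Y}X\Bigr)^{v}.
\]
Hence $F^{c}$ is Berwald iff $\nabla_{Y}X=0$ and $ad^{\ast}_{Y}X=0$ for all $Y$. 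The key observation, which I expect to be the main obstacle, is that $\nabla_{\cdot}X\equiv 0$ already forces $ad^{\ast}_{\cdot}X\equiv 0$: for left invariant $Y,Z$ one has $g(X,[Y,Z])=g(X,\nabla_{Y}Z)-g(X,\nabla_{Z}Y)=0$ (each term vanishes, since $g(X,Z)$ is constant and $\nabla_{\cdot}X=0$), so $g(ad^{\ast}_{Y}X,Z)=g(X,[Y,Z])=0$ for every $Z$. Therefore $F^{c}$ is Berwald $\iff$ $\nabla_{Y}X=0$ for all $Y$ $\iff$ $F$ is Berwald; one must be careful here that the extra vertical-direction term $\tfrac12 ad^{\ast}_{Y}X$ does not create an obstruction beyond $\nabla_{\cdot}X=0$, and the identity just displayed is exactly what rules this out.

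For (B), I would first note that whether $\alpha\phi(\beta/\alpha)$ is a metric of Randers type is a property of $\phi$ alone, so $F$ is of Randers type iff $F^{c}$ is; and when this holds, $F$ and $F^{c}$ can be written as \emph{left invariant} Randers metrics whose underlying $1$-forms are fixed nonzero scalar multiples of $\beta=g(X,\cdot)$ and of $\beta^{c}=\tilde{g}(X^{c},\cdot)$, respectively (the Riemannian parts being of the form $c_{1}g+c_{2}\,\beta\otimes\beta$ and its complete lift). Since, by a standard characterization, a Randers metric is a Douglas metric precisely when its $1$-form is closed, it remains to compare $d\beta$ and $d\beta^{c}$. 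Both $\beta$ and $\beta^{c}$ are left invariant, so $d\beta(Y,Z)=-g(X,[Y,Z])$, while evaluating $d\beta^{c}$ on the frame $\{Y^{c},Y^{v}\}$ with \eqref{Lie brackets of compelet and vertical} and \eqref{lifted Riemannian metric} gives
\[
d\beta^{c}(Y^{c},Z^{c})=-g(X,[Y,Z]),\qquad d\beta^{c}(Y^{v},Z^{c})=d\beta^{c}(Y^{v},Z^{v})=0 .
\]
Hence $d\beta^{c}=0\iff g(X,[\mathfrak{g},\mathfrak{g}])=0\iff d\beta=0$, which yields (B).

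Combining (A) and (B) with Theorem~\ref{Liu-Deng theorem} applied to $F$ and to $F^{c}$ gives: $F$ is Douglas $\iff$ $F$ is Berwald or Douglas--Randers $\iff$ $F^{c}$ is Berwald or Douglas--Randers $\iff$ $F^{c}$ is Douglas. Apart from the point highlighted in (A), everything else is routine bookkeeping with the connection formula \eqref{Levi-Civita connection}, the bracket relations \eqref{Lie brackets of compelet and vertical}, and the standard characterizations of Berwald and Douglas--Randers $(\alpha,\beta)$-metrics.
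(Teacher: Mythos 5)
Your proposal is correct and follows essentially the same route as the paper: both reduce the statement to the Liu--Deng dichotomy (Theorem~\ref{Liu-Deng theorem}), settle the Berwald case with the connection formula \eqref{Levi-Civita connection} after noting that parallelism of $X$ also forces $ad^{\ast}_{Y}X=0$, and settle the Douglas--Randers case by matching the condition $g([\mathfrak{g},\mathfrak{g}],X)=0$ with its lifted counterpart via the bracket relations \eqref{Lie brackets of compelet and vertical}. The only (harmless) difference is that you derive the two auxiliary facts directly --- $\nabla X=0\Rightarrow g(X,[\mathfrak{g},\mathfrak{g}])=0$, and the closed-one-form criterion for Douglas--Randers --- where the paper instead cites Proposition 5.5 and Theorem 3.2 of \cite{An-Deng}.
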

\begin{proof}
Suppose that $F$ is a Douglas metric. \ref{Liu-Deng theorem} shows that $F$ is a Berwald metric or a Douglas metric of Randers type. If $F$ is of Berwald type then proposition 5.5 of \cite{An-Deng} shows that for all $ Y, Z\in \mathfrak{g}$ we have
\begin{equation}
     g ([ Z ,Y] , X) = g (ad^{\ast}_{Y}X , Z)=0.
\end{equation}
So $ ad^{\ast}_{Y}X=0 $. Now the formula \ref{Levi-Civita connection} proves $ \tilde{\nabla} _{Y^{c}}X^{c} = \tilde{\nabla} _{Y^{v}}X^{c} =0$ which means that $ F^{c}$ is of Berwald type. If $F$ is a Douglas metric of Randers type, then by theorem 3.2 of \cite{An-Deng}, for all $Y , Z \in \mathfrak{g}$, we have $ g ([ Z ,Y] , X) =0$. On other hand we have the following relations.
\begin{equation}
    \tilde{g}([Z^{c},Y^{c}],X^{c})= g ([ Z ,Y] , X),~~\tilde{g}([Z^{v},Y^{c}],X^{c})=0,~~\tilde{g}([Z^{v},Y^{v}],X^{c})=0.
\end{equation}
Thus the same theorem says $ F^{c} $ is a Douglas metric of Randers type.\\
Conversely, let $F^{c}$ be of Douglas type. If $F^{c}$ is a Berwald metric then for any $ Y \in \mathfrak{g}$ we have
$ \tilde{\nabla} _{Y^{c}}X^{c} = \tilde{\nabla} _{Y^{v}}X^{c} =0$. So for any $ Y \in \mathfrak{g}$ we have $ \nabla _{Y}X =0 $, which means that $F$ is of Berwald type. If $ F^{c} $ is a Randers metric of Douglas type then $g ([ Z ,Y] , X) =\tilde{g}([Z^{c},Y^{c}],X^{c})=0$, which shows that $F$ is of Douglas type.
\end{proof}
\begin{lem}\label{M1}
Assume that $F$ is an arbitrary left invariant $ (\alpha , \beta) $-metric defined by
\ref{me}. Then $F^{v}$ is of Berwald type if and only if $ad^{\ast}_{X} = ad_{X} $ and for any $Y\in\frak{g}$, $\nabla _{X}Y = \frac{1}{2} [X,Y]$.
\end{lem}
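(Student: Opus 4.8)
The plan is to reduce the Berwald condition for $F^{v}$ to a parallelism statement and then read it off from the connection formulas \ref{Levi-Civita connection}. Recall from the introduction that an $(\alpha,\beta)$-metric $\alpha\phi(\beta/\alpha)$ is of Berwald type if and only if its defining $1$-form is parallel for the Levi-Civita connection of $\alpha$. For $F^{v}$ the underlying Riemannian metric is $\tilde{g}$ and the $1$-form is $\tilde{z}\mapsto\tilde{g}(X^{v},\tilde{z})$; since $\tilde{\nabla}$ is metric, this $1$-form is $\tilde{\nabla}$-parallel exactly when the vector field $X^{v}$ is. Moreover $\tilde{g}$ and $X^{v}$ are left invariant and, by a dimension count, the left invariant vector fields on the Lie group $TG$ are spanned by $\{Y^{c},Y^{v}: Y\in\mathfrak{g}\}$, so $F^{v}$ is of Berwald type if and only if $\tilde{\nabla}_{Y^{c}}X^{v}=0$ and $\tilde{\nabla}_{Y^{v}}X^{v}=0$ for every $Y\in\mathfrak{g}$.

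Next I would substitute into \ref{Levi-Civita connection}. Its third line gives $\tilde{\nabla}_{Y^{c}}X^{v}=(\nabla_{Y}X+\tfrac12\,ad^{\ast}_{X}Y)^{v}$ and its second line gives $\tilde{\nabla}_{Y^{v}}X^{v}=(\nabla_{Y}X-\tfrac12[Y,X])^{c}$. Because $X\mapsto X^{v}$ and $X\mapsto X^{c}$ are injective, the two vanishing conditions are equivalent to
\begin{equation}\label{two conditions}
\nabla_{Y}X+\tfrac12\,ad^{\ast}_{X}Y=0 \qquad\text{and}\qquad \nabla_{Y}X-\tfrac12[Y,X]=0\qquad(\forall\,Y\in\mathfrak{g}).
\end{equation}
The second identity in \ref{two conditions} says $\nabla_{Y}X=\tfrac12[Y,X]$; combining it with torsion-freeness $[X,Y]=\nabla_{X}Y-\nabla_{Y}X$ turns it into $\nabla_{X}Y=\tfrac12[X,Y]$ for all $Y$. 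Then $\nabla_{Y}X=\tfrac12[Y,X]=-\tfrac12\,ad_{X}Y$, and plugging this into the first identity of \ref{two conditions} gives $-\tfrac12\,ad_{X}Y+\tfrac12\,ad^{\ast}_{X}Y=0$, i.e. $ad^{\ast}_{X}=ad_{X}$. This settles the ``only if'' direction.

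For the converse, assuming $ad^{\ast}_{X}=ad_{X}$ and $\nabla_{X}Y=\tfrac12[X,Y]$ for all $Y$, torsion-freeness gives $\nabla_{Y}X=\nabla_{X}Y-[X,Y]=-\tfrac12[X,Y]=\tfrac12[Y,X]$, which is the second identity of \ref{two conditions}; and then $\nabla_{Y}X+\tfrac12\,ad^{\ast}_{X}Y=-\tfrac12\,ad_{X}Y+\tfrac12\,ad_{X}Y=0$, the first. Reading \ref{Levi-Civita connection} in reverse gives $\tilde{\nabla}_{Y^{c}}X^{v}=\tilde{\nabla}_{Y^{v}}X^{v}=0$ for all $Y$, so $X^{v}$ is $\tilde{\nabla}$-parallel and $F^{v}$ is of Berwald type.

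I do not expect a genuine difficulty here; the only points requiring care are correctly reading $\tilde{\nabla}_{Y^{v}}X^{v}$ off the (slightly garbled) display \ref{Levi-Civita connection}, and the bookkeeping with $[X,Y]=\nabla_{X}Y-\nabla_{Y}X$, since it is precisely this substitution that collapses the two identities in \ref{two conditions} into the two conditions asserted by the lemma.
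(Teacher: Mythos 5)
Your proposal is correct and follows exactly the paper's route: reduce the Berwald condition to $\tilde{\nabla}_{Y^{c}}X^{v}=\tilde{\nabla}_{Y^{v}}X^{v}=0$ for all $Y\in\mathfrak{g}$ and then read the result off the connection formulas \ref{Levi-Civita connection}. The only difference is that you spell out the torsion-freeness bookkeeping that converts $\nabla_{Y}X=\tfrac12[Y,X]$ and $\nabla_{Y}X=-\tfrac12\,ad^{\ast}_{X}Y$ into the stated pair of conditions, which the paper leaves implicit.
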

\begin{proof}
$ F^{v} $ is of Berwald type if and only if $ \tilde{\nabla} _{Y^{c}}X^{v} = \tilde{\nabla} _{Y^{v}}X^{v} =0$. Now formula \ref{Levi-Civita connection} completes the proof.
\end{proof}
\begin{remark}
If we consider $F$ is of Berwald type then, the previous lemma together with the formula \ref{Levi-Civita connection} show that, $F^{v}$ is of Berwald type if and only if $ X \in z(\mathfrak{g}) $, where $ z(\mathfrak{g})$ denotes the center of $ \mathfrak{g} $.\\
\end{remark}
\begin{lem}\label{22}
Let $F$ be a left  invariant Randers metric on a Lie group $G$. Then $F$ is a Douglas metric if and only if $ F^{v} $ is a Douglas metric.
\end{lem}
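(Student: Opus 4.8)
The plan is to run the same strategy used in Lemma~\ref{fd}, but now with the vertical lift replacing the complete lift and, crucially, with the help of Lemma~\ref{M1} to handle the Berwald case. Since $F$ is assumed Randers, Theorem~\ref{Liu-Deng theorem} tells us that ``$F$ of Douglas type'' splits into two cases: either $F$ is of Berwald type, or $F$ is a genuine (non-Berwald) Douglas--Randers metric; the same dichotomy applies to $F^v$, which is again a Randers metric because $\phi(s)=1+s$ is preserved under the construction \eqref{vertical lifted Finsler}. So it suffices to match these cases on the two sides.

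First I would treat the Randers (non-Berwald) direction, which should be the easy part. By Theorem~3.2 of \cite{An-Deng}, a left invariant Randers metric determined by $(g,X)$ is Douglas if and only if $g([Z,Y],X)=0$ for all $Y,Z\in\mathfrak{g}$. Applying the same criterion to $F^v$ on $TG$ with the data $(\tilde g, X^v)$, and using the bracket relations \eqref{Lie brackets of compelet and vertical} together with \eqref{lifted Riemannian metric}, one computes $\tilde g([Z^c,Y^c],X^v)=\tilde g(([Z,Y])^c,X^v)=0$ automatically (complete and vertical lifts are $\tilde g$-orthogonal), $\tilde g([Z^v,Y^c],X^v)=\tilde g(([Z,Y])^v,X^v)=g([Z,Y],X)$, and $\tilde g([Z^v,Y^v],X^v)=0$. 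Hence the Douglas--Randers condition for $F^v$ reduces exactly to $g([Z,Y],X)=0$, i.e.\ to the Douglas--Randers condition for $F$. (One should also note that $\tilde g([\tilde Z,\tilde Y],X^v)=0$ for the remaining bracket combinations spanning $\mathfrak{tg}$, so the criterion is genuinely checked on all of $TG$.)

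The main obstacle is the Berwald case, and this is where Lemma~\ref{M1} does the real work. In the forward direction, if $F$ is of Berwald type then $\beta$ is $\nabla$-parallel, so $\nabla_Y X=0$ for all $Y\in\mathfrak{g}$; in particular $\nabla_X Y = -\nabla_Y X + [X,Y] + \text{(symmetric terms)}$ — more cleanly, by Proposition~5.5 of \cite{An-Deng} we get $\mathrm{ad}^\ast_Y X = 0$ for all $Y$, whence $g(\mathrm{ad}^\ast_X Y, Z) = g(Y,[X,Z]) = -g([X,\cdot]^\ast\!\ldots)$; the point is to verify that the two conditions of Lemma~\ref{M1}, namely $\mathrm{ad}^\ast_X = \mathrm{ad}_X$ and $\nabla_X Y = \tfrac12[X,Y]$ for all $Y$, both follow from $\nabla_\cdot X \equiv 0$ — indeed $\nabla_X Y = \nabla_Y X + [X,Y] = [X,Y]$ is \emph{not} quite $\tfrac12[X,Y]$ unless $[X,Y]$ lies in a suitable kernel, so here I expect one must instead observe that Berwald implies $X\in z(\mathfrak{g})$ is false in general, and the correct reduction is: $F$ Berwald $\Rightarrow \nabla_Y X=0\ \forall Y \Rightarrow$ in particular the Koszul formula gives $g(\mathrm{ad}^\ast_X Y,Z)=g([X,Z],Y)$ and $2g(\nabla_X Y,Z) = g([X,Y],Z) - g([Y,Z],X) + g([Z,X],Y)$, and using $g([\cdot,\cdot],X)=0$ (which holds since $\beta$ parallel forces $X$ central-like, or directly from $\nabla X=0$) this collapses to $g(\nabla_X Y,Z)=\tfrac12 g([X,Y],Z)$ and $\mathrm{ad}^\ast_X=\mathrm{ad}_X$. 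Then Lemma~\ref{M1} yields $F^v$ Berwald. Conversely, if $F^v$ is Berwald, Lemma~\ref{M1} gives $\mathrm{ad}^\ast_X=\mathrm{ad}_X$ and $\nabla_X Y=\tfrac12[X,Y]$; feeding these back through the Koszul formula forces $g([Z,Y],X)=0$ and then $\nabla_Y X=0$ for all $Y$, so $\beta$ is $\nabla$-parallel and $F$ is Berwald, hence Douglas. Combining the two cases with Theorem~\ref{Liu-Deng theorem} gives the equivalence. The delicate bookkeeping in the Berwald case — making sure the pair of conditions in Lemma~\ref{M1} is genuinely equivalent to $\nabla_\cdot X\equiv 0$ and not merely implied by it — is the step I would be most careful about.
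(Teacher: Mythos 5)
Your first paragraph is already a complete and correct proof, and it is exactly the paper's argument: by Theorem 3.2 of \cite{An-Deng}, a left invariant Randers metric built from $(g,X)$ is Douglas if and only if $g([Z,Y],X)=0$ for all $Y,Z\in\mathfrak{g}$ --- with no exclusion of the Berwald case, since Berwald Randers metrics satisfy this condition as well --- and the three bracket computations $\tilde g([Z^{c},Y^{c}],X^{v})=0$, $\tilde g([Z^{v},Y^{v}],X^{v})=0$ and $\tilde g([Z^{v},Y^{c}],X^{v})=g([Z,Y],X)$ (extended by bilinearity to all of $Lie(TG)$) show that the criterion for $F^{v}$ reduces to the criterion for $F$. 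You should stop there.

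The ``Berwald case'' that you describe as the main obstacle is both unnecessary and, as executed, false. It is unnecessary because the An--Deng criterion characterizes the Douglas property for \emph{every} left invariant Randers metric, so there is no case split to perform: the equivalence you need is ``Douglas iff Douglas'', not a matching of ``Berwald with Berwald''. It is false because the claim ``$F$ Berwald $\Rightarrow F^{v}$ Berwald'' contradicts Lemma \ref{M1} and the Remark following it: when $F$ is Berwald, $F^{v}$ is Berwald if and only if $X\in z(\mathfrak{g})$. Concretely, $\nabla_{Y}X=0$ for all $Y$ gives $\nabla_{X}Y=[X,Y]$, and your attempted ``collapse'' of the Koszul formula to $g(\nabla_{X}Y,Z)=\frac{1}{2}g([X,Y],Z)$ requires $g([Z,X],Y)=0$ for all $Y$, i.e.\ $[Z,X]=0$, which says $X$ is central --- this is \emph{not} a consequence of $g([\cdot,\cdot],X)=0$, which only says $X$ is orthogonal to the derived algebra. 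Had you carried this step through you would have been forced either to a false conclusion or to an unwarranted extra hypothesis on $X$. Delete the second part and the proof coincides with the paper's.
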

\begin{proof}
If $F$ is a Douglas metric of Randers type, then by theorem 3.2 of \cite{An-Deng}, for all $Y , Z \in \mathfrak{g}$,
$g ([ Z ,Y] , X) =0$. So $ F^{v} $ is a Douglas metric because
\begin{equation}
   \tilde{g}([Z^{v},Y^{v}],X^{v})=0,~~\tilde{g}([Z^{c},Y^{c}],X^{v})=0,~~\tilde{g}([Z^{v},Y^{c}],X^{v})= g ([ Z ,Y] , X).
\end{equation}
Conversely, let $ F^{v} $ be a Douglas metric. Then the above equations show that $F$ is a Douglas metric because by theorem 3.2 of  \cite{An-Deng} we have
\begin{equation}
   \tilde{g}([Z^{v},Y^{c}],X^{v})=0,~~\forall Y , Z \in \mathfrak{g}.
\end{equation}
\end{proof}

In the following theorems we give the flag curvature formulas of  $ F^{c} $ and  $ F^{v} $
explicitly, where $F$ is of  Douglas type.
\begin{theorem}\label{M2}
Suppose that $G$ is a Lie group equipped with a left invariant Riemannian metric $g$ and
\begin{equation*}
    F = \sqrt{g(y,y)} \phi(\frac{g(X(x),y)}{\sqrt{g(y,y)}}),
\end{equation*}
is a left invariant $(\alpha,\beta)$-metric of Berwald type on $G$. Then for the flag curvature of the left invariant $(\alpha,\beta)$-metric metric $F^{c}$ on $TG$ we have: \\
\begin{enumerate}
  \item $\tilde{P} =\textsf{span}\lbrace Y^{c} ,V^{c} \rbrace$,
        \begin{equation*}
            K^{F^{c}}( \tilde{P},Y^{c}) =\frac{1}{\phi^{2}(g(X,Y)).[(1+ g^{2}(X,V)D)]} K(V,Y),
        \end{equation*}
  \item $\tilde{P} =\textsf{span} \lbrace Y^{c} ,V^{v} \rbrace$,
        \begin{eqnarray*}
          K^{F^{c}}( \tilde{P},Y^{c}) = \frac{1}{\phi^{2}(g(X,Y))} \lbrace &&\hspace*{-0.6cm} K(V,Y) +\frac{1}{2} g([V, \nabla_{Y}V ],Y) - \frac{1}{2} g(\nabla_{V}ad^{\ast}_{V}Y, Y) \\
          &&+ \frac{1}{4} g([V, ad^{\ast}_{V}Y],Y) - \frac{1}{2} g([[Y,V], V], Y) \rbrace,
        \end{eqnarray*}
  \item $\tilde{P} =\textsf{span} \lbrace Y^{v} ,V^{c} \rbrace$,
        \begin{eqnarray*}
          K^{F^{c}}( \tilde{P},Y^{v}) = \frac{1}{\phi^{2}(0).[(1+ g^{2}(X,V)D)]}\lbrace&&\hspace*{-0.6cm}
        K(V,Y) +\frac{1}{2} g([Y, \nabla_{V}Y ],U) - \frac{1}{2} g(\nabla_{Y}ad^{\ast}_{Y}V, V) \\
          &&+ \frac{1}{4} g([Y, ad^{\ast}_{Y}V],V) - \frac{1}{2} g([[V,Y], Y], V) \rbrace,
        \end{eqnarray*}
  \item $ \tilde{P} =\textsf{span} \lbrace Y^{v} ,V^{v} \rbrace$,
        \begin{equation*}
            K^{F^{c}}( \tilde{P},Y^{v}) =\frac{1}{\phi^{2}(0)} \lbrace K(V,Y) + g( \nabla_{[V,Y]}Y ,V) + \frac{1}{4} \parallel [V,Y] \parallel^{2} \rbrace,
        \end{equation*}
\end{enumerate}
where $K^{F^{c}}$ and $K$ denote the flag curvature of $F^{c}$ and the sectional curvature of $g$, respectively, and $\lbrace Y,V \rbrace$ is an orthonormal basis for the two dimensional subspace $P$ of $\mathfrak{g}$, with respect to $g$, and $D=\frac{\phi''}{\phi - s\phi'}$.
\end{theorem}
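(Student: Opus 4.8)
The plan is to exploit the fact that for a Berwald $(\alpha,\beta)$-metric the Chern connection of $F$ coincides with the Levi-Civita connection of $\alpha$, so the same holds for the lifted metric $F^{c}$: its Chern connection is $\tilde{\nabla}$, the Levi-Civita connection of $\tilde{g}$ given by formula \ref{Levi-Civita connection}. Consequently the curvature tensor $R^{F^{c}}$ appearing in the flag-curvature formula \ref{flag curvature formula} is exactly the Riemannian curvature of $\tilde{g}$, and the denominator $g_{y}(y,y)g_{y}(u,u)-g_{y}^{2}(u,y)$ can be evaluated using the explicit shape of the fundamental tensor of an $(\alpha,\beta)$-metric of Berwald type. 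First I would recall (citing \cite{Deng-book} or the standard $(\alpha,\beta)$-computation) that for $F=\alpha\phi(\beta/\alpha)$ the fundamental tensor at a flagpole $y$, restricted to the flag plane $P=\mathrm{span}\{y,u\}$ with $\{y,u\}$ $\alpha$-orthonormal, produces $g_{y}(y,y)=\phi^{2}(s)\,\|y\|_{\alpha}^{2}$ where $s=\beta(y)/\alpha(y)$, together with the correction factor $1+\tilde{g}^{2}(X,\cdot)D$, $D=\phi''/(\phi-s\phi')$, multiplying the transverse direction; this is precisely where the scalars $\phi^{2}(g(X,Y))$, $\phi^{2}(0)$ and $(1+g^{2}(X,V)D)$ in the four cases come from, since a lifted flagpole $Y^{c}$ has $\tilde{g}(X^{c},Y^{c})=g(X,Y)$ while a lifted flagpole $Y^{v}$ has $\tilde{g}(X^{c},Y^{v})=0$.

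The core of the argument is then the computation of $\tilde{g}(\tilde{R}(\tilde{u},\tilde{y})\tilde{y},\tilde{u})$ for the four choices of $(\tilde{y},\tilde{u})$ among complete and vertical lifts. Here I would substitute the connection rules \ref{Levi-Civita connection} into $\tilde{R}(\tilde{u},\tilde{y})\tilde{y}=\tilde{\nabla}_{\tilde{u}}\tilde{\nabla}_{\tilde{y}}\tilde{y}-\tilde{\nabla}_{\tilde{y}}\tilde{\nabla}_{\tilde{u}}\tilde{y}-\tilde{\nabla}_{[\tilde{u},\tilde{y}]}\tilde{y}$, using the Lie bracket identities \ref{Lie brackets of compelet and vertical} for lifts and pairing against $\tilde{u}$ via \ref{lifted Riemannian metric}. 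In case (1), with $\tilde{y}=Y^{c}$, $\tilde{u}=V^{c}$, everything stays in the "complete" sector and $\tilde\nabla_{X^c}Y^c=(\nabla_XY)^c$ closes up, so one recovers exactly $g(R(V,Y)Y,V)=K(V,Y)$ up to the scalar, because $Y,V$ are $g$-orthonormal. In case (4), with both flagpole and transverse vector vertical, the rule $\tilde{\nabla}_{X^{v}}Y^{v}=(\nabla_{X}Y-\tfrac12[X,Y])^{c}$ and $[V^{v},Y^{v}]=0$ generate the extra terms: the double covariant derivative contributes $\nabla$-curvature plus quadratic bracket corrections, and $\tilde\nabla_{[V,Y]^{?}}$ must be handled carefully — one gets the displayed $g(\nabla_{[V,Y]}Y,V)+\tfrac14\|[V,Y]\|^{2}$. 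Cases (2) and (3) are the mixed ones and are the most delicate: the bracket $[Y^{c},V^{v}]=[Y,V]^{v}$ lands in the vertical sector, forcing an application of the third line of \ref{Levi-Civita connection} with its $ad^{\ast}$ term, and this is exactly what produces the four-term clusters $\tfrac12 g([V,\nabla_{Y}V],Y)-\tfrac12 g(\nabla_{V}ad^{\ast}_{V}Y,Y)+\tfrac14 g([V,ad^{\ast}_{V}Y],Y)-\tfrac12 g([[Y,V],V],Y)$ and its analogue.

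I expect the main obstacle to be the bookkeeping in the mixed cases (2) and (3): one must track which lifted vectors are complete and which are vertical at each stage of the double covariant derivative, apply the correct one of the (partially redundant, as printed) four lines of \ref{Levi-Civita connection}, and repeatedly convert between $[X,Y]$, $\nabla_XY$ and $ad^{\ast}_YX$ while keeping the metric pairings straight — errors of a factor $\tfrac12$ or a sign are easy here. A secondary subtlety is justifying that $\tilde\nabla$ really is the Chern connection of $F^{c}$: this rests on $F^{c}$ being of Berwald type, which in turn follows from Lemma \ref{fd} (the Berwald case of $F$ forces $ad^{\ast}_{Y}X=0$ and hence $\tilde\nabla_{Y^{c}}X^{c}=\tilde\nabla_{Y^{v}}X^{c}=0$), so I would invoke that lemma at the outset. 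Once the Berwald reduction and the fundamental-tensor normalization are in place, the four formulas drop out of the connection substitution, and no deep new idea is needed beyond careful computation.
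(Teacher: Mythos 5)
Your proposal follows essentially the same route as the paper: invoke Lemma \ref{fd} to conclude that $F^{c}$ is of Berwald type so that its Chern connection coincides with the Levi-Civita connection of $\tilde{g}$, then combine the curvature of the lifted metric (which the paper imports from Theorem 2.4 of \cite{Asgari-Salimi 1} rather than recomputing from \ref{Levi-Civita connection} as you propose) with the flag-curvature formula for Berwald $(\alpha,\beta)$-metrics from Proposition 3.2 of \cite{Deng-Hosseini-Liu-Salimi}, which supplies the factors $\phi^{2}(s)$ and $1+g^{2}(X,V)D$. The only difference is that you spell out the connection and fundamental-tensor computations that the paper delegates to those two cited results.
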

\begin{proof}
Lemma \ref{fd} shows that  $F^{c}$ is of Berwald type. Therefore, the Chern connection of $F^{c}$ and the Levi-Civita connection of $\tilde{g}$ coincide. Now by using theorem 2.4 of \cite{Asgari-Salimi 1} and the flag curvature formula given in proposition 3.2 of \cite{Deng-Hosseini-Liu-Salimi} the proof is completed.
\end{proof}
\begin{theorem}\label{M3}
Assume that $G$ is a Lie group equipped with a left invariant Riemannian metric $g$. Suppose that
\begin{equation}
    F = \sqrt{g(y,y)} + g(X(x),y)
\end{equation}
is the induced left invariant Randers Metric of Douglas type on $G$ which is defined by $g$ and a left invariant vector field $X$. Then for the flag curvature of the left invariant Randers metric $F^{c} $ on $TG$ we have: \\
\begin{enumerate}
  \item $ \tilde{P} =\textsf{span} \lbrace Y^{c} ,V^{c} \rbrace $,
        \begin{eqnarray*}
            K^{F^{c}}(\tilde{P},Y^{c}) = \frac{1}{(1+g(X,Y))^{2}}K(V,Y) +&& \hspace*{-0.6cm}\frac{1}{4(1+g(X,Y))^{2}} \lbrace 3 g([X,Y],Y)\\
            && -4(1+g(X,Y))g(U(Y,\Sigma_{i=1}^{m} \eta_{i}X_{i}),X)\rbrace,
        \end{eqnarray*}

  \item $ \tilde{P} =\textsf{span}\lbrace Y^{c} ,U^{v} \rbrace$,
        \begin{eqnarray*}
          K^{F^{c}}( \tilde{P},Y^{c}) = \frac{1}{(1+g(X,Y))^{2}} \lbrace &&\hspace*{-0.6cm} K(V,Y) +\frac{1}{2} g([V, \nabla_{Y}V ],Y) - \frac{1}{2} g(\nabla_{V}ad^{\ast}_{V}Y, Y)\\
          && +\frac{1}{4} g([V, ad^{\ast}_{V}Y],Y) - \frac{1}{2} g([[Y,V], V], Y) \rbrace \\
          && +\frac{1}{4(1+g(X,Y))^{2}} \lbrace 3 g^{2}([X,Y],Y)\\
          && -4(1+g(X,Y))g(U(Y,\Sigma_{i=1}^{m} \eta_{i}X_{i}),X)\rbrace,
        \end{eqnarray*}
  \item $ \tilde{P} =\textsf{span} \lbrace Y^{v} ,V^{c} \rbrace$,
        \begin{eqnarray*}
          K^{F^{c}}( \tilde{P},Y^{v}) = K(V,Y) + && \hspace*{-0.6cm} \frac{1}{2} g([Y, \nabla_{V}Y ],V) - \frac{1}{2} g(\nabla_{Y}ad^{\ast}_{Y}V, V) \\
          && + \frac{1}{4} g([Y, ad^{\ast}_{Y}V],V) - \frac{1}{2} g([[V,Y], Y], V) \\
          && + \frac{1}{4}\lbrace 3g^{2}([Y,X],Y)+4g(U(Y,\Sigma_{i=1}^{m} \mu_{j}X_{j}),X)\rbrace,
        \end{eqnarray*}
  \item $ \tilde{P} =\textsf{span} \lbrace Y^{v} ,V^{v} \rbrace$,
        \begin{eqnarray*}
          K^{F^{c}}(\tilde{P},Y^{v})= K(V,Y) + && \hspace*{-0.6cm} g( \nabla_{[V,Y]}Y ,V) + \frac{1}{4} \parallel [V,Y] \parallel^{2}\\
          && + \frac{1}{4}\lbrace 3g^{2}([Y,X],Y)+4g(U(Y,\Sigma_{i=1}^{m} \mu_{j}X_{j}),X)\rbrace,
        \end{eqnarray*}
\end{enumerate}
where $\lbrace X_{i}| i=1,\cdots, m \rbrace$ is a basis for the Lie algebra $\mathfrak{g}$ of $G$ and $ U:\mathfrak{g} \times \mathfrak{g} \rightarrow \mathfrak{g} $ is a symmetric function defined by the following equation,
\begin{equation}\label{111}
 2g(U(v_{1},v_{2}),v_{3})=g([v_{3},v_{1}],v_{2})+g([v_{3},v_{2}],v_{1}).
\end{equation}
\end{theorem}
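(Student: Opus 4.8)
The plan rests on Lemma \ref{fd}: since $F$ is a Randers metric of Douglas type on $G$, the complete lift $F^{c}$ is again a Randers metric of Douglas type on $TG$, now with underlying Riemannian metric $\tilde{g}$ and defining left invariant vector field $X^{c}$. So I would apply a flag curvature formula for left invariant Randers metrics of Douglas type --- in the spirit of the one used to prove Theorem \ref{M2}, cf. \cite{Deng-Hosseini-Liu-Salimi} and \cite{An-Deng} --- together with the description of the Riemann curvature of $\tilde{g}$ in theorem 2.4 of \cite{Asgari-Salimi 1}, on the triple $(TG,\tilde{g},X^{c})$, and then specialize the flag plane $\tilde{P}$ to each of the four combinations of complete and vertical lifts. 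Such a formula expresses the flag curvature of a Douglas--Randers metric through explicit $(1+\tilde{g}(X^{c},\text{flagpole}))$-weighted combinations of the Riemannian sectional curvature of the base metric, the Lie bracket, and the symmetric map $U$ of \ref{111}; every $\tilde{g}$-quantity is then to be pushed down to $(G,g)$ by means of \ref{Lie brackets of compelet and vertical}, \ref{lifted Riemannian metric} and \ref{Levi-Civita connection}.

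First I would record the transfer identities. From \ref{Lie brackets of compelet and vertical} and \ref{lifted Riemannian metric} one has $\tilde{g}(X^{c},Y^{c})=g(X,Y)$ and $\tilde{g}(X^{c},Y^{v})=0$, so the scalar $\tilde{g}(X^{c},\text{flagpole})$ equals $g(X,Y)$ when the flagpole is $Y^{c}$ and equals $0$ when it is $Y^{v}$; this already accounts for the denominators $(1+g(X,Y))$ in cases $(1)$ and $(2)$ and their disappearance in cases $(3)$ and $(4)$. Next, substituting the connection formulas \ref{Levi-Civita connection} into theorem 2.4 of \cite{Asgari-Salimi 1} gives, for each plane type, exactly the bracketed Riemannian expression that already appears in Theorem \ref{M2}: namely $K(V,Y)$ together with the $\nabla$- and $ad^{\ast}$-terms in the mixed cases, and $K(V,Y)+g(\nabla_{[V,Y]}Y,V)+\frac{1}{4}\parallel [V,Y] \parallel^{2}$ in the doubly vertical case. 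Finally, the symmetric map of $\tilde{g}$ corresponding to $U$ is read off from \ref{111} evaluated on lifted arguments; after writing $X^{c}$ and the relevant $\tilde{g}$-orthogonal projections in a basis $\{X_{i}\}$ of $\mathfrak{g}$, the $U$-part of the Douglas correction becomes $g(U(Y,\Sigma_{i=1}^{m}\eta_{i}X_{i}),X)$, respectively $g(U(Y,\Sigma_{j=1}^{m}\mu_{j}X_{j}),X)$, while the bracket part contributes the $g([X,Y],Y)$ and $g^{2}([X,Y],Y)$ terms.

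Carrying this out case by case yields the four displayed formulas. In case $(1)$, with $\tilde{P}=\textsf{span}\{Y^{c},V^{c}\}$, only the ``$c$--$c$'' pieces of \ref{Levi-Civita connection} and \ref{Lie brackets of compelet and vertical} enter, so the Riemannian part is just $K(V,Y)$ and the correction consists of the $g([X,Y],Y)$ and $U$-terms alone, suitably divided by powers of $(1+g(X,Y))$. In case $(4)$, $\tilde{P}=\textsf{span}\{Y^{v},V^{v}\}$, the flagpole is $\tilde{g}$-orthogonal to $X^{c}$ so no denominator appears, the Riemannian part is the doubly-vertical expression above, and the correction is the $Y^{v}$-version of the bracket/$U$-terms. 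The mixed cases $(2)$ and $(3)$ combine the two effects: the Riemannian part now carries the $ad^{\ast}$-corrections coming from $\tilde{\nabla}_{X^{c}}Y^{v}$, while the Douglas correction is again the bracket/$U$-terms, with the denominator present in $(2)$ (flagpole $Y^{c}$) and absent in $(3)$ (flagpole $Y^{v}$).

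The main difficulty is bookkeeping rather than anything conceptual. One must track carefully in which of the two subalgebras $\mathfrak{g}^{c}, \mathfrak{g}^{v}\subset Lie(TG)$ each term lives, transport $ad^{\ast}$ and $U$ through the lift correctly, and check that the degree-one homogeneous factor $P(x,y)$ in the Douglas decomposition \ref{Douglas condition} of $F^{c}$ contributes precisely the $(1+g(X,Y))$-weighted terms and nothing more. The mixed cases $(2)$ and $(3)$ are the most delicate: there the $\tilde{\nabla}$-correction terms (the $ad^{\ast}$ pieces) and the Randers--Douglas correction terms are simultaneously nonzero, so one has to be careful not to double-count the contribution of $\tilde{\nabla}_{X^{c}}Y^{v}$.
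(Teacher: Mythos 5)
Your proposal follows essentially the same route as the paper: invoke Lemma \ref{fd} to conclude that $F^{c}$ is a Douglas--Randers metric on $(TG,\tilde{g})$ with defining vector field $X^{c}$, apply the known flag curvature formula for homogeneous Randers metrics of Douglas type (the paper cites theorem 2.1 of \cite{Deng-Hu} for this), compute the Riemannian part $\tilde{K}(\tilde{P})$ from theorem 2.4 of \cite{Asgari-Salimi 1}, and push the $\tilde{U}$- and bracket-terms down to $(G,g)$ via the lift identities \ref{Lie brackets of compelet and vertical} and \ref{lifted Riemannian metric}. The only substantive difference is bibliographical --- the explicit Douglas--Randers flag curvature formula is taken from \cite{Deng-Hu} rather than from \cite{Deng-Hosseini-Liu-Salimi} or \cite{An-Deng} --- and this does not change the argument.
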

\begin{proof}
Lemma \ref{fd} shows that $F^{c}$ is of Douglas type. It is sufficient to use theorem 2.4 of \cite{Asgari-Salimi 1} and the following formula of the flag curvature which is given in theorem 2.1 of \cite{Deng-Hu},
$$ K^{F^{c}}(P,Y^{c}) = \frac{\tilde{g}(Y^{c},Y^{c})}{F^{c}(Y^{c})^{2}}\tilde{K}(\tilde{P})+\frac{1}{4 F^{c}(Y^{c})^{4}} \lbrace 3\tilde{g}(\tilde{U}(Y^{c},Y^{c}),X^{c})-4 F \tilde{g}(\tilde{U}(Y^{c},\tilde{U}(Y^{c},Y^{c}),X^{c}),$$
where $\tilde{U}:\tilde{\mathfrak{g}} \times \tilde{\mathfrak{g}}  \rightarrow \tilde{\mathfrak{g}}=Lie(TG) $ satisfy in the formula \ref{111}. So
\begin{eqnarray}
\begin{array}{lr}
\tilde{g}(\tilde{U}(Y^{c},Y^{c}),X^{c})=g([X,Y],Y),~~\tilde{g}(\tilde{U}(Y^{c},\tilde{U}(Y^{c},Y^{c})),X^{c}) = g(U(Y,\Sigma_{i=1}^{m}\eta_{i}X_{i}),X)), &\nonumber\\
\tilde{g}(\tilde{U}(Y^{v},Y^{v}),X^{c})=g([Y,X],Y),~~\tilde{g}(\tilde{U}(Y^{v},\tilde{U}(Y^{v},Y^{v})),X^{c}) = -g(U(Y,\Sigma_{j=1}^{m}\mu_{j}X_{j}), X),  &
\end{array}
\end{eqnarray}
where $\tilde{U}(Y^{c},Y^{c}) = \Sigma_{i=1}^{m}\eta_{i}X_{i}^{c} +\Sigma_{i=1}^{m}\delta_{i}X_{i}^{v}$ and $\tilde{U}(Y^{v},Y^{v}) = \Sigma_{j=1}^{m}\lambda_{j}X_{j}^{c} +\Sigma_{j=1}^{m}\mu_{j}X_{j}^{v}$.
\end{proof}
In the following theorems we compute the flag curvature of $F^{v}$.
\begin{theorem}
Suppose that $G$ is a Lie group equipped with a left invariant Riemannian metric $g$. If
\begin{equation}
    F = \sqrt{g(y,y)} \phi(\frac{g(X(x),y)}{\sqrt{g(y,y)}}),
\end{equation}
is a left invariant $(\alpha,\beta)$-metric on $G$ such that $F^{v}$ is of Berwald type. Then for the flag curvature of the left invariant $(\alpha,\beta)$-metric $F^{v}$ on $TG$ we have:
\begin{enumerate}
    \item $\tilde{P} =\textsf{span} \lbrace Y^{c} ,V^{c} \rbrace $,
        \begin{equation*}
            K^{F^{v}}( \tilde{P},Y^{c}) =\frac{1}{\phi^{2}(0)} K(V,Y),
        \end{equation*}
    \item $\tilde{P} =\textsf{span} \lbrace Y^{c} ,V^{v} \rbrace$,
        \begin{eqnarray*}
          K^{F^{v}}( \tilde{P},Y^{c}) = \frac{1}{\phi^{2}(0).[1 + g^{2}(X,V)D]} \lbrace &&\hspace*{-0.6cm} K(V,Y) +\frac{1}{2} g([V, \nabla_{Y}V],Y) - \frac{1}{2} g(\nabla_{V}ad^{\ast}_{V}Y, Y) \\
          && +\frac{1}{4} g([V, ad^{\ast}_{V}Y],Y) - \frac{1}{2} g([[Y,V], V], Y) \rbrace,
        \end{eqnarray*}
    \item $\tilde{P} =\textsf{span} \lbrace Y^{v} ,V^{c} \rbrace$,
        \begin{eqnarray*}
          K^{F^{v}}( \tilde{P},Y^{v}) = \frac{1}{\phi^{2}(g(X,Y))}\lbrace &&\hspace*{-0.6cm} K(V,Y) +\frac{1}{2} g([Y, \nabla_{V}Y ],U) - \frac{1}{2} g(\nabla_{Y}ad^{\ast}_{Y}V, V)\\
          && +\frac{1}{4} g([Y, ad^{\ast}_{Y}V],V) - \frac{1}{2} g([[V,Y], Y], V) \rbrace,
        \end{eqnarray*}
    \item $\tilde{P} =\textsf{span} \lbrace Y^{v} ,V^{v} \rbrace$,
        \begin{eqnarray*}
          K^{F^{v}}( \tilde{P},Y^{v}) =\frac{1}{\phi^{2}(g(X,Y)).[1 + g^{2}(X,V)D]} \lbrace K(V,Y) + g( \nabla_{[V,Y]}Y ,V) + \frac{1}{4} \parallel [V,Y] \parallel^{2} \rbrace,
        \end{eqnarray*}
\end{enumerate}
where $K^{F^{v}}$ and $K$ denote the flag curvature of $F^{v}$ and the sectional curvature of $g$ respectively, and $\lbrace Y,V\rbrace$ is an orthonormal basis for $P$ with respect to $g$.
\end{theorem}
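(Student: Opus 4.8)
The plan is to mimic the proof of Theorem~\ref{M2}, but with the complete lift $X^{c}$ of the defining vector field replaced by its vertical lift $X^{v}$. Since by hypothesis $F^{v}$ is of Berwald type, the $1$-form $\beta^{v}=\tilde{g}(X^{v},\cdot)$ is parallel with respect to the Levi-Civita connection $\tilde{\nabla}$ of $\tilde{g}$, so the Chern connection of $F^{v}$ coincides with $\tilde{\nabla}$ and the Riemann curvature endomorphism of $F^{v}$ equals that of $\tilde{g}$. Consequently $K^{F^{v}}$ differs from the sectional curvature of $\tilde{g}$ only through the fundamental tensor $g^{F^{v}}_{\tilde{y}}$, which for an $(\alpha,\beta)$-metric is an explicit algebraic expression in $\tilde{g}$ and $\beta^{v}$. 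The precise input I would use is the flag-curvature formula for Berwald $(\alpha,\beta)$-metrics of Proposition~3.2 of \cite{Deng-Hosseini-Liu-Salimi}: for a $\tilde{g}$-orthonormal pair $\{\tilde{y},\tilde{v}\}$ spanning $\tilde{P}$,
\[
K^{F^{v}}(\tilde{P},\tilde{y})=\frac{1}{\phi^{2}(\tilde{s})\,\bigl[\,1+\tilde{g}(X^{v},\tilde{v})^{2}\,D\,\bigr]}\;\tilde{K}(\tilde{P}),
\qquad
\tilde{s}=\tilde{g}(X^{v},\tilde{y}),\quad D=\frac{\phi''(\tilde{s})}{\phi(\tilde{s})-\tilde{s}\,\phi'(\tilde{s})},
\]
where $\tilde{K}$ denotes the sectional curvature of $\tilde{g}$.

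Next I would make the two inputs explicit. Because $\{Y,V\}$ is $g$-orthonormal, \eqref{lifted Riemannian metric} shows that each of the four pairs $\{Y^{c},V^{c}\}$, $\{Y^{c},V^{v}\}$, $\{Y^{v},V^{c}\}$, $\{Y^{v},V^{v}\}$ is $\tilde{g}$-orthonormal, so it may be fed directly into the displayed formula. Moreover \eqref{lifted Riemannian metric} gives
\[
\tilde{g}(X^{v},Y^{c})=0,\qquad \tilde{g}(X^{v},V^{c})=0,\qquad \tilde{g}(X^{v},Y^{v})=g(X,Y),\qquad \tilde{g}(X^{v},V^{v})=g(X,V),
\]
which yields exactly the scalar prefactors of the four cases: $\phi^{2}(0)$ when the flagpole is $Y^{c}$ and $\phi^{2}(g(X,Y))$ when it is $Y^{v}$, together with the extra factor $1+g^{2}(X,V)D$ precisely in the two cases where the transverse direction is $V^{v}$. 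It then remains to compute $\tilde{K}(\tilde{P})$ for the four plane types; this is Theorem~2.4 of \cite{Asgari-Salimi 1}, obtained from \eqref{Levi-Civita connection}, the bracket rules \eqref{Lie brackets of compelet and vertical} and the definition of $\tilde{R}$, and it gives $\tilde{K}(\textsf{span}\{Y^{c},V^{c}\})=K(V,Y)$, a curvature of the form $K(V,Y)+\frac{1}{2}g([V,\nabla_{Y}V],Y)-\frac{1}{2}g(\nabla_{V}ad^{\ast}_{V}Y,Y)+\frac{1}{4}g([V,ad^{\ast}_{V}Y],Y)-\frac{1}{2}g([[Y,V],V],Y)$ for the two mixed planes (the second being obtained from the first by interchanging $Y$ and $V$, since sectional curvature is symmetric), and $\tilde{K}(\textsf{span}\{Y^{v},V^{v}\})=K(V,Y)+g(\nabla_{[V,Y]}Y,V)+\frac{1}{4}\parallel[V,Y]\parallel^{2}$. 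Substituting these into the displayed flag-curvature formula produces the four stated identities.

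The computation is essentially a substitution, so the only real care is in the curvature bookkeeping underlying Theorem~2.4 of \cite{Asgari-Salimi 1}: for the mixed planes one must carry the connection formulas \eqref{Levi-Civita connection} through the two nested covariant derivatives, keeping track of which summands are complete lifts and which are vertical lifts before pairing against the appropriate lift of $V$, and in the doubly-vertical case one uses $[V^{v},Y^{v}]=0$ together with $\tilde{\nabla}_{X^{v}}Y^{v}=(\nabla_{X}Y-\frac{1}{2}[X,Y])^{c}$. The Berwald hypothesis on $F^{v}$ (equivalently $ad^{\ast}_{X}=ad_{X}$ and $\nabla_{X}Y=\frac{1}{2}[X,Y]$ by Lemma~\ref{M1}) is used only to guarantee that the Chern connection of $F^{v}$ is the metric connection of $\tilde{g}$; it does not simplify the individual curvature terms, which is why the final formulas remain expressed in terms of $\nabla$ and $ad^{\ast}$. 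I would also verify the normalization convention in \cite{Deng-Hosseini-Liu-Salimi} so that the denominator is $\phi^{2}(\tilde{s})$ times the $(\alpha,\beta)$-correction factor rather than, say, $\phi^{2}(\tilde{s})$ times a power of $\tilde{g}(\tilde{y},\tilde{y})$; once that is confirmed, all four cases follow uniformly.
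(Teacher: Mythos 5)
Your proposal is correct and follows essentially the same route as the paper, which likewise proves the theorem by combining the sectional curvature formulas for $\tilde{g}$ from Theorem~2.4 of \cite{Asgari-Salimi 1} with the Berwald flag-curvature formula of Proposition~3.2 of \cite{Deng-Hosseini-Liu-Salimi}. Your explicit tracking of $\tilde{g}(X^{v},\cdot)$ on the four lifted planes, which produces the prefactors $\phi^{2}(0)$, $\phi^{2}(g(X,Y))$ and $1+g^{2}(X,V)D$, is exactly the substitution the paper leaves implicit.
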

\begin{proof}
It is sufficient to use theorem 2.4  of \cite{Asgari-Salimi 1} and the curvature formula of proposition 3.2 of \cite{Deng-Hosseini-Liu-Salimi}.
\end{proof}
\begin{theorem}
Let $G$ be a Lie group equipped with a left invariant Riemannian metric $g$ and
\begin{equation}
    F = \sqrt{g(y,y)} + g(X(x),y),
\end{equation}
is a left invariant Randers metric of Douglas type on $G$ defied by $g$ and a left invariant vector field $X$. Then for the flag curvature of the left invariant $(\alpha,\beta)$-metric $F^{v}$ on $TG$ we have:
\begin{enumerate}
  \item $\tilde{P} =\textsf{span} \lbrace Y^{c} ,V^{c} \rbrace$,
    \begin{equation*}
        K^{F^{v}}( \tilde{P},Y^{c}) = K(V,Y) - \frac{1}{2}g([X,Y], \Sigma_{j=1}^{m} \delta_{j}X_{j}),
    \end{equation*}
  \item $\tilde{P} =\textsf{span} \lbrace Y^{c} ,V^{v} \rbrace$,
     \begin{eqnarray*}
       K^{F^{v}}( \tilde{P},Y^{c}) =  K(V,Y) +&&\hspace*{-0.6cm}\frac{1}{2} g([V, \nabla_{Y}V],Y) - \frac{1}{2} g(\nabla_{V}ad^{\ast}_{V}Y, Y) \\
       && +\frac{1}{4} g([V, ad^{\ast}_{V}Y],Y) - \frac{1}{2} g([[Y,V], V], Y) - \frac{1}{2}g([X,Y], \Sigma_{j=1}^{m} \delta_{j}X_{j}),
     \end{eqnarray*}
  \item $\tilde{P} =\textsf{span} \lbrace Y^{v} ,U^{c} \rbrace$,
    \begin{eqnarray*}
      K^{F^{v}}( \tilde{P},Y^{v}) =&&\hspace*{-0.6cm}\frac{1}{(1+g(X,y))^{2}} \lbrace K(U,Y) +\frac{1}{2} g([Y, \nabla_{U}Y ],U) - \frac{1}{2} g(\nabla_{Y}ad^{\ast}_{Y}U, U) \\
      &&\hspace*{2cm} +\frac{1}{4} g([Y, ad^{\ast}_{Y}U],U)-\frac{1}{2} g([[U,Y], Y], U) \rbrace\\
      &&\hspace*{-0.6cm}-\frac{1}{2(1+g(X,y))^{4}}g([X,\Sigma_{j=1}^{m} \eta_{j}X_{j}], Y),
    \end{eqnarray*}
  \item $\tilde{P} =\textsf{span} \lbrace Y^{v} ,U^{v} \rbrace$,
    \begin{eqnarray*}
      K^{F^{v}}( \tilde{P},Y^{v})=&&\hspace*{-0.6cm}\frac{1}{(1+g(X,y))^{2}} \lbrace K(U,Y) + g( \nabla_{[U,Y]}Y ,U)+ \frac{1}{4} \parallel [U,Y] \parallel^{2} \rbrace \\
      &&\hspace*{-0.6cm}- \frac{1}{2(1+g(X,y))^{4}}g([X,\Sigma_{j=1}^{m} \eta_{j}X_{j}], Y).
    \end{eqnarray*}
\end{enumerate}
\end{theorem}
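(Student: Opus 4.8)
The plan is to combine the Berwald/Douglas structure of $F^{v}$ (already isolated in Lemma \ref{M1} and Lemma \ref{22}) with the explicit Levi--Civita connection formulas \ref{Levi-Civita connection} for $\tilde{g}$ on $TG$, exactly as in the proof of Theorem \ref{M2}. First I would observe that since $F$ is a Randers metric of Douglas type, Lemma \ref{22} guarantees $F^{v}$ is a Douglas metric on $TG$; hence the machinery of \cite{Deng-Hu} for the flag curvature of a homogeneous Randers metric of Douglas type applies with $\tilde{g}$, $X^{v}$, and the associated symmetric map $\tilde{U}$ on $\tilde{\mathfrak{g}} = \mathrm{Lie}(TG)$ in place of $g$, $X$, $U$. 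The key formula to invoke is the one already quoted in the proof of Theorem \ref{M3}: for a flagpole $\tilde{Y}$,
\begin{equation*}
 K^{F^{v}}(\tilde{P},\tilde{Y}) = \frac{\tilde{g}(\tilde{Y},\tilde{Y})}{F^{v}(\tilde{Y})^{2}}\,\tilde{K}(\tilde{P}) + \frac{1}{4\,F^{v}(\tilde{Y})^{4}}\Bigl\{ 3\tilde{g}(\tilde{U}(\tilde{Y},\tilde{Y}),X^{v}) - 4F^{v}(\tilde{Y})\,\tilde{g}(\tilde{U}(\tilde{Y},\tilde{U}(\tilde{Y},\tilde{Y})),X^{v})\Bigr\}.
\end{equation*}

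Next I would compute the three ingredients case by case for the four choices of flagpole $\tilde{Y} \in \{Y^{c}, Y^{v}\}$. The sectional curvature $\tilde{K}(\tilde{P})$ of $\tilde{g}$ on each of the four types of plane $\mathrm{span}\{Y^{c},V^{c}\}$, $\mathrm{span}\{Y^{c},V^{v}\}$, $\mathrm{span}\{Y^{v},V^{c}\}$, $\mathrm{span}\{Y^{v},V^{v}\}$ is read off from Theorem 2.4 of \cite{Asgari-Salimi 1}; this is what produces the Riemannian bracket terms $g([V,\nabla_{Y}V],Y)$, $g(\nabla_{V}ad^{\ast}_{V}Y,Y)$, etc. The norm factor $F^{v}(\tilde{Y})^{2} = \tilde{g}(\tilde{Y},\tilde{Y})\,\phi^{2}(\tilde{g}(X^{v},\tilde{Y})/\sqrt{\tilde{g}(\tilde{Y},\tilde{Y})})$ specializes via $\phi(s)=1+s$ and $\tilde{g}(X^{v},Y^{c})=0$, $\tilde{g}(X^{v},Y^{v})=g(X,Y)$; with $Y$ a unit vector this gives the denominators $1$ (when the flagpole is $Y^{c}$) and $(1+g(X,Y))^{2}$ or $(1+g(X,Y))^{4}$ (when it is $Y^{v}$). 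Finally the $\tilde{U}$-terms are evaluated from the defining relation \ref{111} applied to complete/vertical lifts, using the bracket rules \ref{Lie brackets of compelet and vertical}; writing $\tilde{U}(Y^{c},Y^{c}) = \Sigma_{j}\gamma_{j}X_{j}^{c} + \Sigma_{j}\delta_{j}X_{j}^{v}$ and similarly $\tilde{U}(Y^{v},Y^{v}) = \Sigma_{j}\eta_{j}X_{j}^{c} + \Sigma_{j}\rho_{j}X_{j}^{v}$, one gets $\tilde{g}(\tilde{U}(Y^{c},Y^{c}),X^{v})=0$ (so in cases (1),(2) only the triple $\tilde{U}$-term survives, contributing $-\tfrac12 g([X,Y],\Sigma_{j}\delta_{j}X_{j})$) and $\tilde{g}(\tilde{U}(Y^{v},Y^{v}),X^{v}) = g([X,Y],Y)$ together with a triple term reducing to $g([X,\Sigma_{j}\eta_{j}X_{j}],Y)$, which after dividing by the appropriate power of $1+g(X,Y)$ yields the stated formulas in cases (3),(4).

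The main obstacle is the bookkeeping in expanding $\tilde{U}$ and its iterate in terms of the Levi--Civita connection on $\mathfrak{g}$: unlike the Berwald case treated in Theorem \ref{M2}, here the Chern connection of $F^{v}$ is not simply $\tilde{\nabla}$, so one must be careful that the correct formula from \cite{Deng-Hu} (valid for Douglas--Randers metrics, not just Berwald ones) is used and that the mixed lift terms $\tilde{g}([Z^{v},Y^{c}],X^{v}) = g([Z,Y],X)$ are tracked correctly when verifying that $\tilde{U}(Y^{c},Y^{c})$ has no $X^{v}$-component while $\tilde{U}(Y^{v},Y^{v})$ does. A secondary subtlety is that in cases (3) and (4) the flagpole $Y^{v}$ produces $F^{v}(Y^{v}) = \sqrt{g(Y,Y)}\,(1+g(X,Y))$, and the fourth-power denominator in the $\tilde{U}$-correction must be distributed consistently with the fact that $\tilde{g}(Y^{v},Y^{v})/F^{v}(Y^{v})^{2} = 1/(1+g(X,Y))^{2}$; getting these powers right is the only genuinely delicate point, and it is resolved purely by substitution into the cited flag-curvature identity. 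Everything else is a direct translation of Theorems \ref{M2}, \ref{M3} and the Randers specialization $\phi(s)=1+s$.
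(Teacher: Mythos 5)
Your overall route is exactly the paper's: invoke Lemma \ref{22} to get that $F^{v}$ is of Douglas type, apply the Deng--Hu flag curvature formula (theorem 2.1 of \cite{Deng-Hu}) with $X^{v}$ in place of $X^{c}$, read the sectional curvatures $\tilde{K}(\tilde{P})$ off theorem 2.4 of \cite{Asgari-Salimi 1}, and evaluate the $\tilde{U}$-terms from the defining relation \ref{111} and the lift brackets \ref{Lie brackets of compelet and vertical}. However, there is one concrete error in your evaluation of those terms: you assert $\tilde{g}(\tilde{U}(Y^{v},Y^{v}),X^{v}) = g([X,Y],Y)$. The defining relation gives $2\tilde{g}(\tilde{U}(Y^{v},Y^{v}),X^{v}) = 2\tilde{g}([X^{v},Y^{v}],Y^{v})$, and $[X^{v},Y^{v}]=0$ by \ref{Lie brackets of compelet and vertical}, so this quantity is zero --- which is precisely what the paper's proof records. (The value $g([X,Y],Y)$ is what you get for $\tilde{g}(\tilde{U}(Y^{v},Y^{v}),X^{c})$, the case relevant to Theorem \ref{M3}, since $[X^{c},Y^{v}]=[X,Y]^{v}$; you have mixed up the two lifts of $X$.)

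This is not merely cosmetic: in the Deng--Hu formula the term $\tilde{g}(\tilde{U}(\tilde{Y},\tilde{Y}),X^{v})$ enters quadratically with coefficient $3$, so your nonzero value would produce an additional summand proportional to $g^{2}([X,Y],Y)/(1+g(X,Y))^{4}$ in cases (3) and (4), which does not appear in the stated formulas. As written, your computation therefore does not ``yield the stated formulas in cases (3),(4)'' --- it yields different ones. Once you correct $\tilde{g}(\tilde{U}(Y^{v},Y^{v}),X^{v})=0$ (so that in all four cases only the iterated $\tilde{U}$-term survives in the correction), your argument coincides with the paper's proof.
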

\begin{proof}
Lemma \ref{22} shows that $F^{v}$ is of Douglas type. We know that
\begin{eqnarray}
\begin{array}{lr}
\tilde{g}(\tilde{U}(Y^{c},Y^{c}),X^{v})=\tilde{g}(\tilde{U}(Y^{v},Y^{v}),X^{v})=0, &\nonumber\\
\tilde{g}(\tilde{U}(Y^{c},\tilde{U}(Y^{c},Y^{c})),X^{v})=\frac{1}{2}g([X,Y], \Sigma_{j=1}^{m} \delta_{j}X_{j}), &\nonumber\\
\tilde{g}(\tilde{U}(Y^{v},\tilde{U}(Y^{v},Y^{v})),X^{v})=\frac{1}{2}g([X,\Sigma_{j=1}^{m} \eta_{j}X_{j}], Y).&
\end{array}
\end{eqnarray}
Now a similar method to the proof of theorem \ref{M3} completes the proof.
\end{proof}


\section{\textbf{Examples}}
In this section we study the flag curvature of two important families of $(\alpha,\beta)$-metrics which are called Matsumoto and Kropina metrics. Similar to the Randers metric, these metrics have physical application (see \cite{Antonelli-Ingarden-Matsumoto} and \cite{Matsumoto2}).
\begin{example}
Let $G$ be a Lie group equipped with a left invariant Riemannian metric $g$ and
\begin{equation}\label{Matsumoto metric}
    F = \frac{g(y,y)}{\sqrt{g(y,y)} - g(X(x),y)}
\end{equation}
be the Berwaldian left invariant Matsumoto metric on $G$, defined by $g$ and a left invariant vector field $X$ which is parallel with respect to the Levi-civita connection of $g$. Then for the flag curvature of the left invariant Matsumoto Metric $F^{c} $ on $TG$ we have:
\begin{enumerate}
  \item $ \tilde{P} =\textsf{span}\lbrace Y^{c} ,U^{c} \rbrace $,
    \begin{eqnarray*}
      K^{F^{c}}( \tilde{P},Y^{c}) = \frac{(1-g(X,Y))^{3}(1-2g(X,Y))}{1+2 g^{2}(X, U) +2g^{2}(X, Y)-3g(X,Y)} K(U,Y),
    \end{eqnarray*}
  \item $ \tilde{P} =\textsf{span}\lbrace Y^{c} ,U^{v} \rbrace$,
    \begin{eqnarray*}
      K^{F^{c}}( \tilde{P},Y^{c}) =&&\hspace*{-0.6cm} (1-g(X,Y))^{2} \lbrace K(U,Y) +\frac{1}{2} g([U, \nabla_{Y}U ],Y) \\
      &&\hspace*{-0.6cm}-\frac{1}{2} g(\nabla_{U}ad^{\ast}_{U}Y, Y) + \frac{1}{4} g([U, ad^{\ast}_{U}Y],Y) - \frac{1}{2} g([[Y,U], U], Y) \rbrace,
    \end{eqnarray*}
  \item $ \tilde{P} =\textsf{span}\lbrace Y^{v} ,U^{c} \rbrace$,
    \begin{eqnarray*}
      K^{F^{c}}( \tilde{P},Y^{v}) =&&\hspace*{-0.6cm} \frac{1}{2 g^{2}(X,U)+1} \lbrace
        K(U,Y) +\frac{1}{2} g([Y, \nabla_{U}Y ],U)\\
        &&\hspace*{-0.6cm}- \frac{1}{2} g(\nabla_{Y}ad^{\ast}_{Y}U, U) + \frac{1}{4} g([Y, ad^{\ast}_{Y}U],U) - \frac{1}{2} g([[U,Y], Y], U) \rbrace,
    \end{eqnarray*}
  \item $ \tilde{P} =\textsf{span}\lbrace Y^{v} ,U^{v} \rbrace$,
    \begin{eqnarray*}
      K^{F^{c}}( \tilde{P},Y^{v}) = K(U,Y) + g( \nabla_{[U,Y]}Y ,U) + \frac{1}{4} \parallel [U,Y] \parallel^{2},
    \end{eqnarray*}
\end{enumerate}
where the assumptions are similar to the previous section and $\nabla$ denotes the Levi-Civita connection of $g$.
In this case, the formulas for the flag curvature of $F^v$ are as follows:
\begin{enumerate}
  \item $ \tilde{P} =\textsf{span}\lbrace Y^{c} ,U^{c} \rbrace $,
    \begin{eqnarray*}
          K^{F^{v}}( \tilde{P},Y^{c}) = K(U,Y),
    \end{eqnarray*}
  \item $ \tilde{P} =\textsf{span}\lbrace Y^{c} ,U^{v} \rbrace$,
    \begin{eqnarray*}
          K^{F^{v}}( \tilde{P},Y^{c}) =&&\hspace*{-0.6cm}\frac{1}{1+2 g^{2}(X,U)} \lbrace K(U,Y) +\frac{1}{2} g([U, \nabla_{Y}U ],Y) \\
          &&\hspace*{-0.6cm}- \frac{1}{2} g(\nabla_{U}ad^{\ast}_{U}Y, Y) + \frac{1}{4} g([U, ad^{\ast}_{U}Y],Y) - \frac{1}{2} g([[Y,U], U], Y) \rbrace,
    \end{eqnarray*}
  \item $ \tilde{P} =\textsf{span}\lbrace Y^{v} ,U^{c} \rbrace$,
    \begin{eqnarray*}
          K^{F^{v}}( \tilde{P},Y^{v}) = &&\hspace*{-0.6cm}(1- g(X,Y))^{2} \lbrace
          K(U,Y) +\frac{1}{2} g([Y, \nabla_{U}Y ],U) \\
            &&\hspace*{-0.6cm}- \frac{1}{2} g(\nabla_{Y}ad^{\ast}_{Y}U, U) + \frac{1}{4} g([Y, ad^{\ast}_{Y}U],U) - \frac{1}{2} g([[U,Y], Y], U) \rbrace,
    \end{eqnarray*}
  \item $ \tilde{P} =\textsf{span}\lbrace Y^{v} ,U^{v} \rbrace$,
    \begin{eqnarray*}
          K^{F^{v}}( \tilde{P},Y^{v})=&&\hspace*{-0.6cm}\frac{(1-g(X,Y))^{3}(1-2g(X,Y))}{1+2 g^{2}(X, U) +2g^{2}(X, Y)-3g(X,Y)} \\
          &&\hspace*{-0.6cm}\lbrace K(U,Y) + g( \nabla_{[U,Y]}Y ,U) + \frac{1}{4} \parallel [U,Y] \parallel^{2} \rbrace.
    \end{eqnarray*}
\end{enumerate}
\end{example}

\begin{example}
Suppose that $G$ is a Lie group equipped with a left invariant Riemannian metric $g$ and
\begin{equation}\label{Kropina metric}
    F = \frac{g(y,y)}{ g(X(x),y)},
\end{equation}
is a left invariant Kropina metric of Berwald type on $G$ defined by $g$ and a left invariant vector field $X$. Then for the flag curvature of the left invariant Kropina Metric $ F^{c} $ on $TG$ we have:
\begin{enumerate}
  \item $ \tilde{P} =\textsf{span}\lbrace Y^{c} ,U^{c} \rbrace $,
    \begin{eqnarray*}
        K^{F^{c}}( \tilde{P},Y^{c}) =\frac{g^{4}(X,Y)}{g^{2}(X,U) + g^{2}(X,Y)} K(U,Y),
    \end{eqnarray*}
  \item $ \tilde{P} =\textsf{span}\lbrace Y^{c} ,U^{v} \rbrace$,
    \begin{eqnarray*}
          K^{F^{c}}( \tilde{P},Y^{c}) =&&\hspace*{-0.6cm} (g(X,Y))^{2} \lbrace K(U,Y) +\frac{1}{2} g([U, \nabla_{Y}U ],Y) \\
          &&\hspace*{-0.6cm} -\frac{1}{2} g(\nabla_{U}ad^{\ast}_{U}Y, Y) + \frac{1}{4} g([U, ad^{\ast}_{U}Y],Y) - \frac{1}{2} g([[Y,U], U], Y) \rbrace,
    \end{eqnarray*}
  \item $ \tilde{P} =\textsf{span}\lbrace Y^{v} ,U^{c} \rbrace$
    \begin{eqnarray*}
          K^{F^{c}}(\tilde{P},Y^{v})~ is~ not ~defined,
    \end{eqnarray*}
  \item $ \tilde{P} =\textsf{span}\lbrace Y^{v} ,U^{v} \rbrace$,
    \begin{eqnarray*}
           K^{F^{c}}( \tilde{P},Y^{v})~ is~ not ~defined.
    \end{eqnarray*}
\end{enumerate}
Also the flag curvature formulas of the Finsler metric $F^v$ are as follows:
\begin{enumerate}
  \item $ \tilde{P} =\textsf{span}\lbrace Y^{c} ,U^{c} \rbrace $,
    \begin{eqnarray*}
        K^{F^{v}}( \tilde{P},Y^{c})~ is~ not~defined,
    \end{eqnarray*}
  \item $ \tilde{P} =\textsf{span}\lbrace Y^{c} ,U^{v} \rbrace$,
    \begin{eqnarray*}
          K^{F^{v}}( \tilde{P},Y^{c})~ is~~not~~defined,
    \end{eqnarray*}
  \item $ \tilde{P} =\textsf{span}\lbrace Y^{v} ,U^{c} \rbrace$
    \begin{eqnarray*}
          K^{F^{v}}( \tilde{P},Y^{v}) =&&\hspace*{-0.6cm} g^{2}(X,Y) \lbrace K(U,Y) +\frac{1}{2} g([Y, \nabla_{U}Y ],U) \\
            &&\hspace*{-0.6cm}-\frac{1}{2} g(\nabla_{Y}ad^{\ast}_{Y}U, U) + \frac{1}{4} g([Y, ad^{\ast}_{Y}U],U) - \frac{1}{2} g([[U,Y], Y], U) \rbrace,
    \end{eqnarray*}
  \item $ \tilde{P} =\textsf{span}\lbrace Y^{v} ,U^{v} \rbrace$,
    \begin{eqnarray*}
           K^{F^{v}}( \tilde{P},Y^{v})=\frac{g^{4}(X,Y)}{g^{2}(X,U) + g^{2}(X,Y)} \lbrace K(U,Y) + g( \nabla_{[U,Y]}Y ,U) + \frac{1}{4} \parallel [U,Y] \parallel^{2} \rbrace.
    \end{eqnarray*}
\end{enumerate}
\end{example}


\end{document}